\newcommand{\blue}[1]{\textcolor{blue}{#1}}
\let\c@table\c@figure
\newtheorem{theorem}{Theorem}
\newtheorem{proposition}[theorem]{Proposition}
\newtheorem{lemma}[theorem]{Lemma}
\newtheorem{corollary}[theorem]{Corollary}
\newtheorem*{theproblem}{Problem 14.10(a)}
\theoremstyle{definition}
\theoremstyle{remark}
\newtheorem{remark}[theorem]{Remark}
\newcommand{\Tbar}{\mkern 1.5mu\overline{\mkern-1.5mu T}}
\newcommand{\Z}{\mathbb{Z}}
\newcommand{\N}{\mathbb{N}}
\newcommand{\R}{\mathbb{R}}
\newcommand{\Q}{\mathbb{Q}}
\newcommand{\Aut}{\mathrm{Aut}}
\newcommand{\TA}{T\!\hspace{0.1em}\mathcal{A}}
\newcommand{\VA}{V\!\hspace{0.1em}\mathcal{A}}
\newcommand{\PL}{\mathrm{PL}}
\title{Embedding $\mathbb{Q}$ into a Finitely Presented Group}
\author{James Belk}
\address{Department of Mathematics, Cornell University, Ithaca, New York 14853.}
\email{\href{mailto:jmb226@cornell.edu}{jmb226@cornell.edu}}
\thanks{The first author has been partially supported by EPSRC grant EP/R032866/1 as well as the National Science Foundation under Grant No.~DMS-1854367 during the creation of this paper.}
\author{James Hyde}
\address{Department of Mathematics, Cornell University, Ithaca, New York 14853.}
\email{\href{mailto:jth263@cornell.edu}{jth263@cornell.edu}}
\author{Francesco Matucci}
\address{Dipartimento di Matematica e Applicazioni, Universit\`{a} degli Studi di Milano--Bicocca, Milan 20125, Italy.}
\email{\href{mailto:francesco.matucci@unimib.it}{francesco.matucci@unimib.it}}
\thanks{The third author is a member of the Gruppo Nazionale per le Strutture Algebriche, Geometriche e le loro Applicazioni (GNSAGA) of the Istituto Nazionale di Alta Matematica (INdAM) and gratefully acknowledges the support of the 
Funda\c{c}\~ao para a Ci\^encia e a Tecnologia  (CEMAT-Ci\^encias FCT projects UIDB/04621/2020 and UIDP/04621/2020) and of the Universit\`a degli Studi di Milano--Bicocca
(FA project ATE-2016-0045 ``Strutture Algebriche'').
}
\date{}  
\begin{document}

\maketitle

\begin{abstract}We observe that the group of all lifts of elements of Thompson's group~$T$ to the real line is finitely presented and contains the additive group~$\Q$ of the rational numbers.  This gives an explicit realization of the Higman embedding theorem for~$\Q$, answering a Kourovka notebook question of Martin Bridson and Pierre de~la Harpe.
\end{abstract}

\section*{Introduction}

In 1961, Graham Higman proved that any countable group with a computable presentation can be embedded into a finitely presented group~\cite{Hig}.
For example, the additive group $\Q$ of rational numbers has computable presentation
\[
\langle s_1,s_2,s_3,\ldots \mid s_n^n=s_{n-1}\text{ for all }n\geq 2\rangle
\]
and can therefore be embedded into some finitely presented group.  Unfortunately, Higman's construction is difficult to carry out in practice, and group presentations produced by his procedure are  quite large and unwieldy.

Higman was for many years interested in finding more explicit embeddings of various naturally occurring recursively presented groups such as $\Q$ into finitely presented groups~\cite{Johnson}. 
In 1999, the following question was submitted to the Kourovka notebook~\cite{Kourovka} and labeled as a ``well-known problem''.  The question is attributed to Pierre de la Harpe in~\cite{Kourovka}, but Martin Bridson and de la Harpe have informed us that the question was originally submitted jointly by the two of them.

\begin{theproblem}It is known that any recursively presented group embeds in a finitely presented group.  Find an explicit and ``natural'' finitely presented group\/ $\Gamma$ and an embedding of the additive group of the rationals\/ $\Q$ in\/~$\Gamma$.
\end{theproblem}

The problem then asks the same question for the group~$\mathrm{GL}_n(\Q)$.  The problem originally included a part~(b) that asked for any finitely \textit{generated} example, although such examples had already been supplied by Hall in 1959~\cite{Hall}.  In particular, 
Hall observed that if $W$ is a vector space over $\Q$ with basis $\{e_n\}_{n\in\Z}$ and $\sigma,\rho$ are the linear transformations of $W$ defined by $\sigma(e_n)=e_{n+1}$ and 
$\rho(v_n) = p_ne_n$, where $\{p_n\}_{n\in\Z}$ is some enumeration of the primes, then the orbit of $e_0$ under $\langle\sigma,\rho\rangle$ generates~$W$ as an abelian group, and hence
the semidirect product $W\rtimes\langle\sigma,\rho\rangle$ is finitely generated and contains~$\Q$.
Further finitely generated examples were later supplied by Mikaelian~\cite{Mik}. 
As for embeddings into finitely presented groups, Mikaelian~\cite{Mik2} has described how to explicitly carry out Higman's construction for $\mathbb{Q}$ as well as many other groups of interest, such as  $\mathrm{GL}_n(\mathbb{Q})$.

In this note we observe that $\Q$ embeds into a finitely presented group $\Tbar$ which was introduced by Ghys and Sergiescu in 1987~\cite{GhSe}.  This is an explicit group of homeomorphisms of the real line, which has a presentation with two generators and four relators (see Remark~\ref{rem:FinitePresentation} below). Specifically, $\Tbar$~consists of all homeomorphisms $f\colon \R\to \R$ that satisfy the following conditions:
\begin{enumerate}
    \item The homeomorphism $f$ is piecewise-linear, with finitely many breakpoints on each compact interval.\smallskip
    \item Each linear portion of $f$ has the form $f(x)=2^n x+d$, where $n\in\Z$ and $d$ is a dyadic rational.\smallskip
    \item Each breakpoint of $f$ has dyadic rational coordinates.\smallskip
    \item The homeomorphism $f$ commutes with the translation $x\mapsto x+1$.  That is, $f(x+1)=f(x)+1$ for all $x\in\R$.
\end{enumerate}
It follows from (4) that the set of breakpoints of $f$ is invariant under $x\mapsto x+1$, and hence any $f\in \Tbar$ is either linear or has infinitely many breakpoints. As a group of orientation-preserving homeomorphisms of the real line, $\Tbar$~is torsion-free, and indeed right-orderable~\cite[Theorem~6.8]{GhysSurvey}.  The monomorphisms $\Q\to \Tbar$ that we describe below are order-preserving. 

The group $\Tbar$ is closely related to the three groups $F$, $T$, and $V$ introduced by Richard J.~Thompson in the \mbox{1960's \cite{Tho, CFP}}. Thompson's group~$F$ arises naturally as the ``group of associative laws'' and also arose independently in homotopy theory~\cite{FreydHeller}, while Thompson's groups $T$ and $V$ were the first known examples of infinite, finitely presented simple groups.  Thompson's group~$T$ is the group of homeomorphisms of the circle $\R/\Z$ that satisfy conditions (1), (2), and (3) above, and~$\Tbar$ is precisely the group of all ``lifts'' of elements of $T$ to the real line.  In particular, the quotient of $\Tbar$ by the cyclic subgroup generated by $x\mapsto x+1$ is isomorphic to~$T$.  This cyclic subgroup is precisely the center of~$\Tbar$, and therefore $\Tbar$ is a central extension of~$T$ (though it is not the universal central extension).  Ghys and Sergiescu introduced $\Tbar$ in this context as part of their investigation into the cohomology of~$T$~\cite{GhSe}. 

Thompson made the surprising observation that $T$ contains elements of arbitrary finite order~\cite{Tho}.  In~2011, Bleak, Kassabov, and the third author gave an elementary argument that  $\Q/\Z$ embeds into Thompson's group~$T$~\cite{BlKaMa}. They did not consider the consequences for the group~$\Tbar$, but it follows easily from their result that $\Q$ embeds into~$\Tbar$.  We give a self-contained proof of this below, and indeed we prove something a bit stronger:

\begin{theorem}
The group $\Tbar$ has continuum many different subgroups isomorphic to~$\Q$, all of which contain the center of~$\Tbar$.
\end{theorem}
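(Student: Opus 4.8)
The plan is to realize $\Q$ inside $\Tbar$ by exhibiting a sequence of elements that behave like the generators $s_n$ in the computable presentation $\langle s_1,s_2,\ldots\mid s_n^n=s_{n-1}\rangle$, and then produce continuum many such sequences. The key observation is that $\Tbar$ is a central extension of $T$ with center generated by the unit translation $\tau\colon x\mapsto x+1$, so $\tau$ plays the role of a fixed nonzero rational (say the image of $1\in\Q$) that every copy of $\Q$ must contain. The result of Bleak--Kassabov--Matucci gives an embedding $\Q/\Z\hookrightarrow T$; the idea is to lift this to an embedding $\Q\hookrightarrow\Tbar$ by choosing, for each element of $\Q/\Z$, a specific lift to $\Tbar$ in a way that is compatible with addition.

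First I would set up the arithmetic carefully. Given a torsion element of order $n$ in $T$, any lift to $\Tbar$ is an element $g$ with $g^n=\tau^k$ for some integer $k$ coprime to $n$ (the rotation number of the lift determines $k$). To build a copy of $\Q$ I want an element $g_n$ for each $n$ with $g_n^n=g_{n-1}$ and $g_1=\tau$; equivalently a coherent system of lifts whose rotation numbers are $1/n!$ or some similar nested sequence of rationals. The subgroup generated by such a system is abelian (all the $g_n$ commute because they are essentially powers of a common ``infinite root''), torsion-free (it sits in $\Tbar$), and isomorphic to $\Q$ by matching $g_n\mapsto 1/n!$. So the concrete task is to construct, inside $\Tbar$, an infinite commuting tower of roots of the central element $\tau$.

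For the continuum-many claim I would vary the construction by a parameter. The natural source of freedom is \emph{where} on the circle the rotation-type elements act, or equivalently a choice of conjugating element / a choice of the support or the combinatorial pattern of the finite-order rotations being lifted. Concretely, I expect to parametrize a family of embeddings $\varphi_\alpha\colon\Q\to\Tbar$ by an uncountable index set (for instance by choosing among uncountably many distinct infinite sequences of nested dyadic subdivisions, or by conjugating a fixed copy by elements that move the supports to distinguishable configurations), and then argue that distinct parameters give genuinely different subgroups of $\Tbar$, not merely distinct embeddings. Each $\varphi_\alpha(\Q)$ contains $\tau=\varphi_\alpha(1)$, so all of them contain the center as required.

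The main obstacle will be the coherence of the lifts together with the injectivity/distinctness bookkeeping. Constructing a single root is easy, but to get an honest copy of $\Q$ I must ensure the nested roots all commute and that the rotation numbers multiply correctly so that no nontrivial element of $\Q$ maps to the identity — this amounts to verifying that the combinatorial PL data can be chosen compatibly at every level $n$ simultaneously, which requires a uniform construction rather than an ad hoc one at each stage. The harder part for the ``continuum many subgroups'' conclusion is separating the images as \emph{subsets} of $\Tbar$: I will need an invariant of the subgroup (such as the set of breakpoints, the germ at a fixed point, or the precise support data of the generators) that takes continuum many distinct values across the family and is genuinely a property of the subgroup rather than of the chosen generating system. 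Pinning down such an invariant and proving it distinguishes the parameters is where I expect the real work to lie.
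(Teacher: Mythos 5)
Your overall strategy --- building a tower $s_1=z$, $s_n^n=s_{n-1}$ of roots of the central translation and varying the choices to obtain continuum many copies --- is exactly the paper's, but the proposal stops short of the one step that actually requires work: constructing the roots. The paper does this in two short lemmas. First, any fixed-point-free $g\in\PL_2(\R)$ has infinitely many $n$th roots in $\PL_2(\R)$: subdivide the fundamental domain $[0,g(0)]$ by dyadic points $0=p_0<\cdots<p_n=g(0)$, choose Thompson-like homeomorphisms $f_i\colon[p_{i-1},p_i]\to[p_i,p_{i+1}]$ for $i<n$, force the last piece to be $f_n=gf_1^{-1}\cdots f_{n-1}^{-1}$ so that $f^n=g$ on the fundamental domain, and extend $g$-equivariantly. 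Second --- and this is the observation you are missing when you worry about ``coherence of the lifts'' --- if $g^m=z$ then any such root $f$ satisfies $f^{mn}=z$ and therefore automatically commutes with $z$, i.e.\ lies in $\Tbar$; no care is needed to make the root compatible with the translation. Likewise your concern that the construction must be ``uniform rather than ad hoc at each stage'' is unfounded: once $s_n^n=s_{n-1}$ holds for all $n$, every earlier $s_m$ is a power of $s_n$, so the generated group is automatically abelian, is a quotient of $\Q$ presented by $\langle s_1,s_2,\ldots\mid s_n^n=s_{n-1}\rangle$, and equals $\Q$ because $s_1=z$ has infinite order. The stage-by-stage ad hoc choice is fine.

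The other place where you anticipate ``real work'' --- finding a geometric invariant (breakpoints, germs, supports) to separate the continuum many subgroups --- is also unnecessary. A subgroup $A\leq\Tbar$ isomorphic to $\Q$ and containing $z$ determines the sequence $\{s_n\}$ uniquely, because $s_n$ is the unique $n!$-th root of $z$ inside $A$ (roots are unique in $\Q$). Hence two sequences that first differ at some stage generate different subgroups, and since there are infinitely many choices at every stage, the tree of choices yields continuum many distinct subgroups. Finally, the clause ``all of which contain the center'' is immediate for the constructed copies (they contain $s_1=z$ by fiat), but the paper also proves the stronger statement that \emph{every} subgroup of $\Tbar$ isomorphic to $\Q$ contains $z$: by Higman's theorem, infinite-order elements of $V$ (hence of $T$) have no roots of arbitrarily large order, so $\Q$ does not embed in $T$, so any copy of $\Q$ in $\Tbar$ must meet $\langle z\rangle$; if $f^n=z^n$ in a torsion-free group with $f$ and $z$ commuting, then $f=z$. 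Your rotation-number framing via lifting the Bleak--Kassabov--Matucci embedding of $\Q/\Z$ into $T$ is a workable alternative viewpoint, but as written the proposal is a plan rather than a proof: the explicit root construction is the missing content.
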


Brin has proven that~$\Tbar$ embeds naturally into the automorphism group of Thompson's group~$F$.  Specifically, Brin proved \cite[Theorem~1]{BrinChameleon} that $\mathrm{Aut}(F)$ has an index-two subgroup $\mathrm{Aut}_+(F)$ which is isomorphic to the group of all homeomorphisms of~$\mathbb{R}$ that satisfy conditions (1), (2), and (3) above and agree with elements of $\Tbar$ in neighborhoods of $-\infty$ and~$\infty$.  Brin also showed \cite[Theorem~1]{BrinChameleon} that this group $\mathrm{Aut}_+(F)$ fits into a short exact sequence
\[
1\to F \to \mathrm{Aut}_+(F) \to T\times T \to 1,
\]
and it follows easily that $\mathrm{Aut}(F)$ is finitely presented. Indeed, Burillo and Cleary have computed an explicit finite presentation for $\Aut(F)$ in~\cite{BuClAuto}.
This gives another natural example of a finitely presented group that contains~$\Q$:

\begin{corollary}The automorphism group of Thompson's group~$F$ has a subgroup isomorphic to\/~$\Q$.
\end{corollary}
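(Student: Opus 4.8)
The plan is to combine the two facts already assembled in the introduction. On the one hand, the Main Theorem supplies an embedding $\Q\hooklongrightarrow\Tbar$. On the other hand, Brin's theorem identifies the index-two subgroup $\Aut_+(F)\leq\Aut(F)$ with the group $H$ of all homeomorphisms of $\R$ that satisfy conditions (1), (2), and (3) and agree with elements of $\Tbar$ in neighborhoods of $-\infty$ and $+\infty$. It therefore suffices to embed $\Q$ into $H$, since $H\cong\Aut_+(F)\leq\Aut(F)$.

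The key observation is that $\Tbar$ is itself a subgroup of $H$. Indeed, every $f\in\Tbar$ satisfies conditions (1), (2), and (3) by definition, and $f$ agrees with the element $f\in\Tbar$ on all of $\R$, hence in particular on neighborhoods of $\pm\infty$. Thus the inclusion $\Tbar\hooklongrightarrow H$ is a group monomorphism. Composing the embedding $\Q\hooklongrightarrow\Tbar$ from the Main Theorem with the chain $\Tbar\hooklongrightarrow H\cong\Aut_+(F)\hooklongrightarrow\Aut(F)$ yields the desired copy of $\Q$ inside $\Aut(F)$.

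I expect there to be no genuine obstacle beyond unwinding Brin's identification: the only point to verify is that $\Tbar$ lands inside $H$, which is immediate since the defining conditions of $H$ are weaker than those of $\Tbar$ — periodicity (condition (4)) is dropped, and the agreement-near-infinity requirement is automatically met by actual elements of $\Tbar$. In particular, the whole content of the corollary is that the copies of $\Q$ already constructed in $\Tbar$ survive unchanged under the natural inclusion of $\Tbar$ into $\Aut(F)$, so no new group-theoretic work is required.
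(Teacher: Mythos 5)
Your proposal is correct and matches the paper's intended argument: the corollary follows immediately by composing the embedding $\Q\hookrightarrow\Tbar$ from the Main Theorem with the inclusion $\Tbar\leq\Aut_+(F)\leq\Aut(F)$ furnished by Brin's identification. The paper gives no separate proof beyond this observation, so your verification that $\Tbar$ sits inside the group Brin describes is exactly the (only) point that needed checking.
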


\begin{remark}\label{rem:FinitePresentation}
The smallest known presentation for Thompson's group $T$ has two generators and five relators, and was derived by Lochak and Schneps in~\cite{LoSc}. (Note that the version in~\cite{LoSc} contains a typo.  See \cite[Proposition~1.3]{FunKap} for a corrected version).  Using this presentation together with the fact that $\Tbar$ is a central extension of $T$ by~$\mathbb{Z}$, it is not difficult to derive a presentation for $\Tbar$ with two generators and four relators.  Specifically, 
\[
\Tbar = \bigl\langle a,b \;\bigl|\; a^4=b^3,(ba)^5=b^9, [bab,a^2baba^2]=[bab,a^2b^2a^2baba^2ba^2]=1\bigr\rangle
\]
where $a$ and $b$ are the elements of $\Tbar$ whose restrictions to $[0,1]$ are defined by
\[
a(x) = \begin{cases}
\frac{1}{2}x+\frac{1}{2} & \text{if }0\leq x \leq \frac{3}{4}, \\[3pt]
x+\frac{1}{8} & \text{if } \frac{3}{4} < x \leq \frac{7}{8}, \\[3pt]
4x-\frac{5}{2} & \text{if }\frac{7}{8}<x\leq 1,
\end{cases}
\qquad
b(x) = \begin{cases}
\frac{1}{2}x+\frac{1}{2} & \text{if } 0 \leq x \leq \frac{1}{2}, \\[3pt]
x+\frac{1}{4} & \text{if }\frac{1}{2}< x \leq \frac{3}{4}, \\[3pt]
2x-\frac{1}{2} & \text{if } \frac{3}{4} < x \leq 1.
\end{cases}
\]
\end{remark}

\begin{remark}In addition to the above results, we have obtained an explicit embedding of $\Tbar$ and hence an embedding of~$\Q$ into a finitely presented simple group~$\TA$, verifying the Boone-Higman conjecture in the case of~$\Q$.  We have also obtained an explicit finitely presented simple group $\VA$ that contains all countable abelian groups. Both of these groups will be described in a forthcoming paper. 
\end{remark}

\begin{remark}
Hurley~\cite{Hur} and Ould Houcine~\cite{Hou} have proven that there exists a finitely presented group $G$ whose center is isomorphic to~$\mathbb{Q}$.  It would be interesting to find a natural example of such a group, or at least a natural example of a finitely presented group whose center contains~$\mathbb{Q}$.
\end{remark}

\subsection*{Acknowledgments} The authors would like to thank Collin Bleak for many helpful
conversations and suggestions about this work.  We would also like to thank Matthew Brin and Matthew Zaremsky for their comments on an early draft of this manuscript and Martin Bridson and Pierre de la Harpe for comments
on the historical perspective.  Finally, we would like to thank an anonymous referee for many helpful comments and suggestions.

\section*{Inclusion of \texorpdfstring{$\Q$}{Q} into \texorpdfstring{$\Tbar$}{Tbar}}\label{sec:Inclusion}

Let $\PL_2(\R)$ denote the (uncountable) group of all piecewise-linear homeomorphisms of $\R$ that satisfy conditions (1) through (3) for elements of $\Tbar$ given in the introduction.  The group $\Tbar$ is precisely the centralizer in $\PL_2(\R)$ of the homeomorphism $z(x)=x+1$.

If $[a,b]$ and $[c,d]$ are closed intervals in~$\R$, we say that a piecewise-linear homeomorphism $h\colon [a,b]\to [c,d]$ is \textit{Thompson-like} if it is a restriction of an element of~$\PL_2(\R)$, i.e.~if it satisfies conditions (1) through (3) for elements of $\Tbar$ given in the introduction.  It is well-known that if $[a,b]$ and $[c,d]$ have dyadic rational endpoints, then there exists at least one Thompson-like homeomorphism $[a,b]\to[c,d]$ (cf.~\cite[Lemma~4.2]{CFP}).

\begin{lemma}\label{lem:RootsThompsonLike}
Let $g$ be an element of\/ $\PL_2(\R)$ without fixed points and let $n\geq 2$. Then there exist infinitely many different $f\in\PL_2(\R)$ such that~$f^n=g$.
\end{lemma}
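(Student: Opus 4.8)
The plan is to build an $n$th root of $g$ by hand, exploiting the fact that a fixed-point-free element of $\PL_2(\R)$ acts on the line like a translation. First I would reduce to the case $g(x)>x$ for all $x$: since every element of $\PL_2(\R)$ is increasing (its slopes are positive powers of~$2$), the continuous function $x\mapsto g(x)-x$ never vanishes and so has constant sign; if $g(x)<x$ everywhere I would instead produce roots of $g^{-1}$ and invert them, using that $\PL_2(\R)$ is closed under inversion and that $f^n=g^{-1}$ is equivalent to $(f^{-1})^n=g$. So assume $g(x)>x$. Then for any basepoint $a$ the orbit $\{g^k(a)\}_{k\in\Z}$ is strictly increasing and tends to $\pm\infty$ (a finite limit would be a fixed point of $g$), so the intervals $[g^k(a),g^{k+1}(a)]$ tile $\R$. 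I would choose $a$ dyadic, so that $b:=g(a)$ is dyadic as well, since elements of $\PL_2(\R)$ carry dyadic rationals to dyadic rationals.

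Next I would refine this tiling $n$-fold in a $g$-equivariant way. Pick dyadic points $a=c_0<c_1<\cdots<c_n=b$; applying powers of $g$ produces a bi-infinite increasing sequence of dyadic points $d_m$, indexed so that $d_{kn+i}=g^k(c_i)$ and hence $d_{m+n}=g(d_m)$ for all $m$. The root $f$ I seek is the ``shift by one tile'': $f$ maps $[d_m,d_{m+1}]$ to $[d_{m+1},d_{m+2}]$ via a Thompson-like homeomorphism $\phi_m$, chosen to satisfy the equivariance $\phi_{m+n}=g\circ\phi_m\circ g^{-1}$, which forces $f$ to commute with $g$. Such an $f$ shifts indices by one, so $f^n$ and $g$ both carry $[d_m,d_{m+1}]$ onto $[d_{m+n},d_{m+n+1}]$; the content is to make them equal. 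By the equivariance it suffices to prescribe $\phi_0,\dots,\phi_{n-1}$ on the first period, and I would let $\phi_0,\dots,\phi_{n-2}$ be arbitrary Thompson-like homeomorphisms between the relevant dyadic intervals (these exist by the cited standard fact) and then define the final piece by $\phi_{n-1}:=g\circ(\phi_{n-2}\circ\cdots\circ\phi_0)^{-1}$, which is again Thompson-like and has the correct domain and range. This choice makes the single composite $\phi_{n-1}\circ\cdots\circ\phi_0$ equal to $g$ on $[c_0,c_1]$.

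The main step is then to promote this one identity to $f^n=g$ on all of $\R$ and to check that $f$ is a genuine element of $\PL_2(\R)$; this bookkeeping is where I expect the real work to lie. For the former I would write $\Phi_m:=\phi_{m+n-1}\circ\cdots\circ\phi_m$ and observe the recursion $\Phi_{m+1}=(g\phi_m g^{-1})\circ\Phi_m\circ\phi_m^{-1}$ coming from the equivariance; feeding in $\Phi_0=g$ collapses the right-hand side to $g$, so by induction $\Phi_m=g$ for every $m$, i.e.\ $f^n=g$ everywhere. For the latter, the tiles $[d_m,d_{m+1}]$ accumulate only at $\pm\infty$, the maps $\phi_m$ agree at each shared endpoint (both send $d_{m+1}$ to $d_{m+2}$), and each $\phi_m$ is a composite of Thompson-like maps and of $g$, so $f$ is an increasing PL bijection with dyadic breakpoints and power-of-$2$ slopes, finitely many on each compact interval.

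Finally, for the count, because $n\geq 2$ there is at least one freely chosen piece $\phi_0$, and there are infinitely many Thompson-like homeomorphisms $[c_0,c_1]\to[c_1,c_2]$ (for instance, precompose a fixed one with the infinitely many elements of the Thompson-like self-homeomorphism group of $[c_0,c_1]$). Since $f$ restricted to $[c_0,c_1]$ is exactly $\phi_0$, distinct choices of $\phi_0$ yield distinct roots $f$, giving infinitely many. The delicate points to verify carefully are that the forced final piece $\phi_{n-1}$ really lands in $\PL_2(\R)$ with the right endpoints and that the equivariant extension is globally consistent and continuous; everything else is a matter of organizing the tiling.
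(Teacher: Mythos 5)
Your proposal is correct and follows essentially the same strategy as the paper: subdivide the fundamental domain $[a,g(a)]$ into $n$ dyadic subintervals, choose $n-1$ of the Thompson-like transition maps freely, solve for the last one so that the composite equals $g$, and extend $g$-equivariantly to all of $\R$. Your bi-infinite indexing and the explicit recursion for $\Phi_m$ are just a reorganization of the paper's computation, and your reduction to the case $g(x)>x$ by passing to $g^{-1}$ fills in the ``without loss of generality'' that the paper leaves implicit.
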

\begin{proof}
Without loss of generality, suppose that $g(0)>0$.  Choose dyadic rationals $0=p_0<p_1<\cdots <p_n=g(0)$, and for each $1\leq i< n$ choose a Thompson-like homeomorphism $f_i\colon [p_{i-1},p_i]\to [p_i,p_{i+1}]$. Let
$f_n\colon [p_{n-1},p_n]\to [p_n,g(p_1)]$ be the homeomorphism $gf_1^{-1}f_2^{-1}\cdots f_{n-1}^{-1}$, and let $f\in\PL_2(\R)$ be the homeomorphism that agrees with $f_i$ on each $[p_{i-1},p_i]$ ($1\leq i\leq n$) and satisfies
\[
f(x) = g^kfg^{-k}(x)
\]
for each $x\in[g^k(0),g^{k+1}(0)]$ with $k\ne 0$.

To prove that $f^n=g$, observe that on the interval $[p_{i-1},p_i]$, the function $f^n$ restricts to the composition
\[
(gf_{i-1}g^{-1})\cdots(gf_2g^{-1})(gf_1g^{-1})f_n\cdots f_{i+1}f_i
\]
Since $f_n=gf_1^{-1}f_2^{-1}\cdots f_{n-1}^{-1}$, the expression above simplifies to~$g$.
Thus $f^n$ agrees with $g$ on $[0,g(0)]$, and it follows easily that $f^n=g$. Moreover, since there are infinitely many possible choices for~$p_1,\ldots,p_{n-1}$ and $f_1,\ldots,f_{n-1}$, there
are infinitely many possibilities for~$f$.
\end{proof}

\begin{lemma}\label{lem:extract-root2}
Let $m\geq 1$ and let $g\in \Tbar$ so that $g^m=z$.  Then for every $n\geq 2$ there exist infinitely many different $f\in \Tbar$ so that $f^n=g$.
\end{lemma}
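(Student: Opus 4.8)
The plan is to extract the roots directly from Lemma~\ref{lem:RootsThompsonLike} and then observe that, because of the extra hypothesis $g^m=z$, any such root is forced to lie in~$\Tbar$ without any further work.

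First I would verify that Lemma~\ref{lem:RootsThompsonLike} is applicable, i.e.~that $g$ is fixed-point-free. Since $z(x)=x+1$ is a nontrivial translation it has no fixed points, and any fixed point of $g$ would also be a fixed point of $g^m=z$; hence $g$ has no fixed points. Regarding $g$ as an element of $\PL_2(\R)$, Lemma~\ref{lem:RootsThompsonLike} then produces infinitely many distinct $f\in\PL_2(\R)$ with $f^n=g$.

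The only remaining point is to promote each such $f$ from $\PL_2(\R)$ to~$\Tbar$. Recall that $\Tbar$ is precisely the centralizer of $z$ in $\PL_2(\R)$, so it suffices to check that $f$ commutes with~$z$. This is exactly where the relation $g^m=z$ is used: from $f^n=g$ we get $f^{nm}=(f^n)^m=g^m=z$, so $z$ is a power of~$f$, and therefore $fz=f\cdot f^{nm}=f^{nm}\cdot f=zf$. Thus each $f$ already belongs to~$\Tbar$, and since Lemma~\ref{lem:RootsThompsonLike} supplies infinitely many distinct such~$f$, the lemma follows.

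I do not expect a genuine obstacle here; the content of the argument is the single observation that membership in $\Tbar$ need not be built into the construction of the root but is automatic once one notices that $z=f^{nm}$ is itself a power of~$f$. The work of actually building a piecewise-linear $n$-th root has already been carried out in Lemma~\ref{lem:RootsThompsonLike}, so this lemma reduces to that one together with the centralizer characterization of~$\Tbar$.
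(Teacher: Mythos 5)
Your proposal is correct and follows exactly the same route as the paper's proof: deduce fixed-point-freeness of $g$ from $g^m=z$, invoke Lemma~\ref{lem:RootsThompsonLike} to get infinitely many $n$th roots in $\PL_2(\R)$, and observe that each such root commutes with $z=f^{mn}$ and hence lies in $\Tbar$. No differences worth noting.
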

\begin{proof}
Note that $g$ cannot have any fixed points, since these would also be fixed points of~$z$. Therefore, by Lemma~\ref{lem:RootsThompsonLike}, there exist infinitely many $f\in\PL_2(\R)$ such that $f^n=g$.  Any such homeomorphism commutes with $z$ since $f^{mn}=z$, and therefore every such $f$ lies in~$\Tbar$.
\end{proof}

\begin{proposition}\label{prop:UncountablyMany}The group $\Tbar$ has continuum many subgroups isomorphic to~$\Q$.
\end{proposition}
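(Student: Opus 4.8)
The plan is to build a single increasing chain of infinite cyclic subgroups whose union is a copy of $\Q$, and then to exploit the freedom in Lemma~\ref{lem:extract-root2} to produce continuum-many such chains, no two of which generate the same subgroup. First I would construct a sequence $z=g_1,g_2,g_3,\ldots$ in $\Tbar$ satisfying $g_n^n=g_{n-1}$ for all $n\geq 2$. Set $g_1=z$, which satisfies $g_1^{1!}=z$. Given $g_{n-1}$ with $g_{n-1}^{(n-1)!}=z$, apply Lemma~\ref{lem:extract-root2} (with $m=(n-1)!$ and exponent $n$) to obtain infinitely many $g_n\in\Tbar$ with $g_n^n=g_{n-1}$; any such choice then satisfies $g_n^{n!}=z$, so the recursion continues. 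The point to record for later is that each step offers at least two choices for $g_n$.

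Next I would identify the generated subgroup. Since $g_m=g_n^{\,n!/m!}$ whenever $m\leq n$, each $g_m$ is a power of $g_n$, so the $g_n$ pairwise commute and $G:=\langle g_1,g_2,\ldots\rangle=\bigcup_n\langle g_n\rangle$ is abelian. Each $g_n$ is fixed-point-free (being a root of $z$), hence of infinite order, so $\langle g_n\rangle\cong\Z$ and $\langle g_{n-1}\rangle$ sits in $\langle g_n\rangle$ with index $n$ via $g_{n-1}=g_n^n$. The assignment $g_n\mapsto 1/n!$ extends to a homomorphism $G\to\Q$: it is compatible with the inclusions, since $g_{n-1}=g_n^n\mapsto n/n!=1/(n-1)!$; it is injective because $g_n^k\mapsto k/n!$ vanishes only for $k=0$; and its image is $\{a/n!:a\in\Z,\ n\geq 1\}$, which is all of $\Q$ because any $p/q$ equals $(p\,n!/q)/n!$ for $n\geq q$. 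Thus $G\cong\Q$, and $G$ contains $z$ and hence the center of $\Tbar$.

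The main obstacle, and the crux of the argument, is upgrading ``continuum-many sequences'' to ``continuum-many distinct subgroups.'' The binary branching in the first step yields $2^{\aleph_0}$ distinct sequences $(g_n)$, but a priori two different sequences might generate the same subgroup. I would resolve this with a rigidity statement: inside any subgroup $G$ isomorphic to $\Q$ and containing $z$, the roots $g_n$ are uniquely determined by $G$. Indeed, fix an isomorphism $\phi\colon G\to\Q$; after composing with an automorphism of $\Q$ we may assume $\phi(z)=1$, and this normalization is then forced, since the only automorphism of $\Q$ fixing $1$ is the identity. As $g_n^{n!}=z$ and $\Q$ is torsion-free and uniquely divisible, $\phi(g_n)=1/n!$ is the unique element with this property, so $g_n=\phi^{-1}(1/n!)$ depends only on $G$. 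Consequently two of our sequences that generate the same subgroup must agree term by term, whence distinct sequences give distinct subgroups. This produces continuum-many subgroups isomorphic to $\Q$, completing the proof (and, since each contains $z$, simultaneously proving the stronger statement in the Theorem).
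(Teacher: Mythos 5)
Your proposal is correct and follows essentially the same route as the paper: start with $g_1=z$ and repeatedly apply Lemma~\ref{lem:extract-root2} to extract $n$-th roots, so that the resulting sequence generates a copy of $\Q$ realizing the presentation $\langle s_1,s_2,\ldots\mid s_n^n=s_{n-1}\rangle$. Your rigidity argument (that $g_n$ is the unique $n!$-th root of $z$ inside any subgroup isomorphic to $\Q$ containing $z$, so distinct sequences yield distinct subgroups) is a welcome justification of a point the paper dispatches in one sentence.
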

\begin{proof}
Observe that $\Q$ has presentation
\[
\langle s_1,s_2,\ldots \mid s_n^n = s_{n-1}\text{ for }n\geq 2 \rangle.
\]
To obtain an embedding of $\Q$ into $\Tbar$ it suffices to find a sequence $\{s_n\}_{n\in\N}$ of elements of~$\Tbar$ such that $s_1$ has infinite order and $s_n^n = s_{n-1}$ for all $n\geq 2$.  Such a sequence can be defined recursively by letting $s_1=z$ and then repeatedly applying Lemma~\ref{lem:extract-root2} to find, for each $n\geq 2$, an element $s_n\in \Tbar$ such that $s_n^n=s_{n-1}$. Since there are infinitely many choices for~$s_n$ at each stage, this procedure constructs continuum many different copies of~$\Q$.
\end{proof}

\begin{remark}
Since each subgroup of $\Tbar$ is conjugate to only countably many other subgroups, it follows from Proposition~\ref{prop:UncountablyMany} that $\Tbar$ has continuum many conjugacy classes of subgroups isomorphic to~$\Q$.
\end{remark}

\begin{remark}\label{rem:SpecificExample}
The choice of the elements $s_n$ in the proof of Proposition~\ref{prop:UncountablyMany} can be carried out constructively.  For example, let $\{d_n\}_{n\in\N}$ be the decreasing sequence of dyadics in $[0,1]$ defined recursively by $d_1=1$ and $d_n=d_{n-1}/2^{n-1}$. Let $s_1=z$, and for each $n\geq 2$ let $s_n$ be the $n$th root of $s_{n-1}$ in $\PL_2(\R)$ that satisfies
\[
s_n(x) = \begin{cases}x+d_n & \text{if }0\leq x \leq d_n, \\[3pt]
2x & \text{if }d_n < x \leq \tfrac{1}{2}d_{n-1}.
\end{cases}
\]
Then the sequence $\{s_n\}_{n\in\N}$ generates a subgroup of $\Tbar$ isomorphic to~$\Q$.

It is possible to write these elements explicitly in terms of the generators for $\Tbar$ given in Remark~\ref{rem:FinitePresentation}.  Specifically, let $p$ be any element of $\Tbar$ which is the identity on $\bigl[0,\tfrac{1}{2}\bigr]$ and has slope $1/2$ on $\bigl[0,\tfrac{5}{8}\bigr]$ (e.g.~$p=a^{-1}b$), let $q$ be any element of $\Tbar$ which maps $\bigl[0,\tfrac{3}{8}\bigr]$ linearly to $\bigl[0,\tfrac{3}{4}\bigr]$ (e.g~$q=a^{-1}ba^2b^{-1}$), and let $r=b^{-1}aba^2(ab)^{-2}ba^{-1}b$ be the element of $\Tbar$ which satisfies
\[
r(x) = \begin{cases} x & \text{if }0\leq x \leq \tfrac{1}{4} \\[2pt]
\tfrac12 x +\tfrac{1}{8} & \text{if }\tfrac{1}{4}\leq x \leq \tfrac{1}{2} \\[2pt]
2x-\tfrac{5}{8} & \text{if }\tfrac{1}{2}\leq x\leq \tfrac{5}{8} \\[2pt]
x & \text{if }\tfrac{5}{8}\leq x\leq 1.
\end{cases}
\]
Define a sequence of elements $\{t_n\}_{n\geq 3}$ recursively by $t_3=b^2a(ab)^{-2}b$ and
\[
t_n = (t_{n-1} \triangleleft q^{n-2})\bigl(r\triangleleft p^{n-4}q^{n(n-3)/2}\bigr)
\]
for $n\geq 4$, where $x\triangleleft y$ denotes $y^{-1}xy$.  Then $t_n$ maps the left half of $[0,d_{n-1}]$ linearly to $[0,d_{n-1}]$, maps the right half of $[0,d_{n-1}]$ linearly to the left half of $[d_{n-1},d_{n-1}+d_n]$, maps $[d_{n-1},d_{n-1}+d_n]$ linearly to its right half, and is the identity on $[d_{n-1}+d_n,1]$.  The desired sequence $\{s_n\}$ can now be defined recursively by
\begin{align*}
s_n &= 
[t_n,t_n\triangleleft s_{n-1}] \bigl(t_n\triangleleft s_{n-1}^{1-(n-1)!}\bigr)\cdots \bigl(t_n\triangleleft s_{n-1}^{-2}\bigr)\bigl(t_n\triangleleft s_{n-1}^{-1}\bigr) t_n
\\[3pt] &= [t_n,t_n\triangleleft s_{n-1}]\,s_1\,\bigl(s_{n-1}^{-1}t_n\bigr)^{(n-1)!}
\end{align*}
for $n\geq 4$, where $[x,y]$ denotes $xyx^{-1}y^{-1}$, $s_3=b^{-1}aba^{-2}baba^{-1}b^{-1}$, $s_2=ba^2b^{-1}$, and $s_1=b^3$.

The idea here is that $t_n$ is roughly the same as $s_n$ on $[0,d_{n-1}]$ and is the identity elsewhere.  Since $[0,d_{n-1}]$ is a fundamental domain for the action of~$\langle s_{n-1}\rangle$,  we can construct $s_n$ by multiplying together conjugates of $t_n$ by powers of $s_{n-1}^{-1}$, with the correction factor $[t_n,t_n\triangleleft s_{n-1}]$ accounting for the overlap between the initial $t_n$ and the last conjugate $t_n\triangleleft s_{n-1}^{1-(n-1)!} = t_n \triangleleft s_{n-1}$.  The authors have checked all of the above computations in \textit{Mathematica}~\cite{code}.
\end{remark}

\begin{remark}
The copy of $\Q$ constructed in Remark~\ref{rem:SpecificExample} has the property that the orbit of $0$ is dense in $\R$.  For such a copy, the resulting action of $\Q$ on $\R$ is conjugate by a homeomorphism of $\R$ to the usual action of $\Q$ on $\R$ by translation. However, there are also ``exotic'' copies of $\Q$ in $\Tbar$ for which the orbit of $0$ is not dense in~$\R$. For example, we can choose a sequence $\{s_n\}_{n\in\N}$ in $\Tbar$ with $s_1=z$ and $s_n^n=s_{n-1}$ ($n\geq 2$) such that $s_n(0) = \tfrac{1}{2}+\tfrac{1}{2^n}$ for all~$n$.  In this case, the subgroup $\langle s_1,s_2,s_3,\ldots\rangle$ is isomorphic to~$\Q$, but the orbit of $0$ under the action of this subgroup does not intersect the interval $(0,1/2]$.  It follows that the restricted wreath product $F \wr_{\Q/\Z} \Q = \bigl(\bigoplus_{\Q/\Z} F\bigr)\rtimes \Q$ embeds into~$\Tbar$, where Thompson's group~$F$ embeds into $\Tbar$ as the group of elements that are the identity on~$[1/2,1]$.  Bleak, Kassabov, and the third author used a similar argument to prove that $F\wr (\Q/\Z)$ embeds into~$T$ \cite[Theorem~1.6]{BlKaMa}.
\end{remark}

\begin{remark}\label{rem:NoQInV}
Higman proved that elements of Thompson's group $V$ of infinite order do not have roots of arbitrarily large orders~\cite[Corollary~9.3]{Hig2}.  It follows that $\Q$ does not embed into~$V$, and hence $\Q$ does not embed into $T$, either.
\end{remark}

Every copy of $\Q$ obtained from the proof of Proposition~\ref{prop:UncountablyMany}
contains the center $\langle z\rangle$ of~$\Tbar$. The following proposition asserts that these are all of the subgroups of $\Tbar$ isomorphic to~$\Q$.

\begin{proposition}\label{prop:QSubgroupsContainCenter}
Every subgroup of $\Tbar$ isomorphic to\/~$\Q$ contains the center of $\Tbar$.
\end{proposition}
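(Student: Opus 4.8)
The plan is to pass to the quotient $T=\Tbar/\langle z\rangle$, where $\langle z\rangle$ (with $z(x)=x+1$) is the center of $\Tbar$, and to exploit two facts already recorded: that $\Q$ does not embed into $T$ (Remark~\ref{rem:NoQInV}) and that $\Tbar$ is torsion-free. So let $H\leq\Tbar$ with $H\cong\Q$; the goal is to show $z\in H$, equivalently $\langle z\rangle\leq H$.

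First I would restrict the quotient map $\pi\colon\Tbar\to T$ to $H$. Its kernel is $H\cap\langle z\rangle$. If this intersection were trivial, then $\pi|_H$ would be injective, exhibiting a copy of $\Q$ inside $T$ and contradicting Remark~\ref{rem:NoQInV}. Hence $H\cap\langle z\rangle$ is a nontrivial subgroup of the infinite cyclic group $\langle z\rangle$, so $H\cap\langle z\rangle=\langle z^k\rangle$ for some integer $k\geq 1$. The remaining task is to rule out $k>1$, i.e.\ to promote $z^k\in H$ to $z\in H$.

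For this I would use that $H\cong\Q$ is divisible: the element $z^k\in H$ has a $k$-th root in $H$, so there is $w\in H$ with $w^k=z^k$. Since $z$ is central in $\Tbar$ it commutes with $w$, whence $(wz^{-1})^k=w^kz^{-k}=1$. As $\Tbar$ is torsion-free this forces $wz^{-1}=1$, that is $w=z$. Therefore $z=w\in H$ and $\langle z\rangle\leq H$, as desired.

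I do not anticipate a serious obstacle, since the argument is a short assembly of facts already available. The one genuinely external input is that $\Q$ does not embed into $T$ (Remark~\ref{rem:NoQInV}, itself a consequence of Higman's result that infinite-order elements of $V$ have no roots of unbounded order), and this is exactly what prevents $H$ from meeting the center trivially. The only point needing a little care is isolating $z$ from $z^k$ in the last step; rather than invoking translation numbers of the associated circle maps, I would lean on divisibility of $\Q$ together with torsion-freeness of $\Tbar$, which settles it cleanly.
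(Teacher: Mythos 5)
Your proposal is correct and follows essentially the same route as the paper's proof: use the failure of $\Q$ to embed into $T$ (Remark~\ref{rem:NoQInV}) to get $z^k\in H$ for some $k\geq 1$, then extract a $k$-th root $w\in H$ of $z^k$ by divisibility and conclude $w=z$ from centrality of $z$ and torsion-freeness of $\Tbar$. The paper's argument is word-for-word the same assembly of facts, so there is nothing to add.
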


\begin{proof}
Let $A$ be a subgroup of $\Tbar$ isomorphic to~$\Q$.  Since $\Q$ does not embed into $T$ (see Remark~\ref{rem:NoQInV}), the projection homomorphism $\Tbar\to T$ cannot be injective on~$A$, so $A$ must intersect the center of~$\Tbar$ nontrivially.  In particular, $A$ must contain $z^n$ for some $n\geq 1$.  Since $A$ is isomorphic to~$\Q$, there exists an $f\in A$ so that~$f^n=z^n$.  Since $f$ and $z$ commute it follows that $(fz^{-1})^n =1$, and since $\Tbar$ is torsion-free we conclude that $fz^{-1}=1$, and therefore $A$ contains~$z$. 
\end{proof}

\bigskip
\newcommand{\arxiv}[1]{\href{https://arxiv.org/abs/#1}{\blue{arXiv:#1}}}
\newcommand{\doi}[1]{\href{https://doi.org/#1}{\blue{doi:#1}}}
\bibliographystyle{plain}

\begin{thebibliography}{10}

\smallskip
\bibitem{code}
J.~Belk, J.~Hyde, and F.~Matucci, QInTbar.nb, GitHub repository, 2021.  \textcolor{blue}{\href{https://github.com/jimbelk/QInTbar}{https://\linebreak[1]github.com/\linebreak[1]jimbelk/\linebreak[1]QInTbar}}.

\smallskip
\bibitem{BlKaMa}
C.~Bleak, M.~Kassabov, and F.~Matucci, Structure theorems for groups of homeomorphisms of the circle. \textit{International Journal of Algebra and Computation} \textbf{21.06} (2011): 1007--1036. \doi{10.1142/S0218196711006571}

\smallskip
\bibitem{BuClAuto}
J.~Burillo and S.~Cleary, The automorphism group of Thompson's group $F$: subgroups and metric properties. \textit{Revista Matem\'atica Iberoamericana} \textbf{29.3} (2013):  809--828. \doi{10.4171/RMI/741}.



\smallskip
\bibitem{BrinChameleon}
M.~Brin, The chameleon groups of Richard J.\ Thompson: Automorphisms and dynamics. \textit{Publications Math\'{e}matiques de l'IH\'{E}S} \textbf{84} (1996): 5--33. \doi{10.1007/BF02698834}.

\smallskip
\bibitem{CFP}
J.~Cannon, W.~Floyd, and W.~Parry, Introductory notes on Richard Thompson's groups. \textit{Enseignement Math\'{e}matique} \textbf{42} (1996): 215--256. \doi{10.5169/seals-87877}.


\smallskip
\bibitem{FreydHeller}
P.~Freyd and A.~Heller, Splitting homotopy idempotents II. \textit{Journal of Pure and Applied Algebra} \textbf{89.1--2} (1993): 93--106. \doi{10.1016/0022-4049(93)90088-B}.

\smallskip
\bibitem{FunKap}
L.~Funar and C.~Kapoudjian, The braided Ptolemy-Thompson group is finitely presented. 
\textit{Geom. Topol.} \textbf{12.1} (2008), 475--530. \doi{10.2140/gt.2008.12.475}.

\smallskip
\bibitem{GhysSurvey}
\'{E}.~Ghys, Groups acting on the circle. \textit{L'Enseignement Math\'{e}matique} \textbf{47.3/4} (2001): 329--408. \doi{10.5169/seals-65441}.

\smallskip
\bibitem{GhSe}
\'{E}.~Ghys and V.~Sergiescu, Sur un groupe remarquable de diff\'{e}omorphismes du cercle. \textit{Commentarii Mathematici Helvetici} \textbf{62.1} (1987): 185--239. \doi{10.1007/BF02564445}.

\smallskip
\bibitem{Hall}
P.~Hall, On the finiteness of certain soluble groups. \textit{Proceedings of the London Mathematical Society} \textbf{3.4} (1959): 595--622. \doi{10.1112/plms/s3-9.4.595}.

\smallskip
\bibitem{Hig}
G.~Higman, Subgroups of finitely presented groups. \textit{Proceedings of the Royal Society of London. Series A. Mathematical and Physical Sciences} \textbf{262.1311} (1961): 455--475. \doi{10.1098/rspa.1961.0132}.

\smallskip
\bibitem{Hig2}
G.~Higman, Finitely presented infinite simple groups. Notes on Pure Mathematics~8 (1974),
Australian National University, Canberra. 

\smallskip
\bibitem{HoltHandbook}
D.~Holt and B.~Eick and E.~A.~O'Brien, \textit{Handbook of computational group theory}. Discrete Mathematics and its Applications (Boca Raton). Chapman \& Hall/CRC, Boca Raton, FL, 2005. xvi+514 pp

\smallskip
\bibitem{Hou}
A.~Ould Houcine, Embeddings in finitely presented groups which preserve the center. \textit{Journal of Algebra} \textbf{307.1} (2007): 1--23.  \doi{10.1016/j.jalgebra.2006.07.015}.

\smallskip
\bibitem{Hur}
B.~Hurley, Small Cancellation Theory Over Groups Equipped with an Integer-Valued Length Function. \textit{Studies in Logic and the Foundations of Mathematics} \textbf{95} (1980): 157--214. \doi{10.1016/S0049-237X(08)71337-5}.

\smallskip
\bibitem{Johnson}
D.~L.~Johnson, 
Embedding some recursively presented groups. \textit{Groups St. Andrews 1997 in Bath}, II, 410--416, London Math. Soc. Lecture Note Ser., \textbf{261}, \textit{Cambridge Univ. Press, Cambridge}, 1999.

\smallskip
\bibitem{LoSc}
 P.~Lochak and L.~Schneps, The universal {P}tolemy-{T}eichm\"{u}ller groupoid,
\textit{Geometric {G}alois actions, 2}, 325--347, 
London Math. Soc. Lecture Note Ser. \textbf{243}, \textit{Cambridge Univ. Press, Cambridge}, 1997.

\smallskip
\bibitem{Kourovka}
V.~Mazurov and E.~Khukhro, Unsolved problems in group theory: The Kourovka notebook, No.~14. Sobolev Institute of Mathematics, 1999. \arxiv{1401.0300}.

\smallskip
\bibitem{Mik}
V.~Mikaelian, On a problem on explicit embeddings of the group~$\Q$. \textit{International Journal of Mathematics and Mathematical Sciences} \textbf{2005.13} (2005): 2119--2123. \doi{10.1155/IJMMS.2005.2119}.

\smallskip
\bibitem{Mik2}
V.~Mikaelian, The Higman operations and embeddings of recursive groups.
\arxiv{2002.09728}

\smallskip
\bibitem{Tho}
R.~Thompson, Notes on three groups of homeomorphisms. \textit{Unpublished but widely circulated handwritten notes} (1965): 1--11.



\end{thebibliography}

\end{document}